\theoremstyle{plain}
\newtheorem{thm}{Theorem}[section]
\newtheorem{lem}[thm]{Lemma}
\newtheorem{prop}[thm]{Proposition}
\theoremstyle{definition}
\newtheorem*{defn}{Definition}
\newtheorem*{ex}{Example}
\theoremstyle{remark}
\newtheorem*{rem}{Remark}
\title[C$^*$-algebras generated by composition operators]
{C$^*$-algebras generated by multiplication operators and composition operators by functions with self-similar branches {\rm II}}
\author{Hiroyasu Hamada}
\address{National Institute of Technology (KOSEN), Sasebo College, 
Okishin, Sasebo, Nagasaki, 857-1193, Japan.}
\email{h-hamada@sasebo.ac.jp}
\keywords{composition operator, multiplication operator,
C$^*$-algebra, self-similar}
\subjclass[2020]{Primary 46L55, 47B33; Secondary 28A80, 46L08}
\begin{document}

\begin{abstract}
Let $K$ be a compact metric space and let $\varphi: K \to K$ be continuous.
We study a C$^*$-algebra $\mathcal{MC}_\varphi$
generated by
all multiplication operators by continuous functions on $K$
and a composition operator $C_\varphi$ induced by $\varphi$ on
a certain $L^2$ space.
Let $\gamma = (\gamma_1, \dots, \gamma_n)$ be a system of proper contractions
on $K$. Suppose that $\gamma_1, \dots, \gamma_n$ are inverse branches of
$\varphi$ and $K$ is self-similar.
We consider the Hutchinson
measure $\mu^H$ of $\gamma$ and the $L^2$ space $L^2(K, \mu^H)$.
Then we show
that the C$^*$-algebra $\mathcal{MC}_\varphi$ is isomorphic to
the C$^*$-algebra $\mathcal{O}_\gamma (K)$ associated
with $\gamma$ under the open set condition
and the measure separation condition.
This is a generalization of our previous work, in which we studied the case
where $\gamma$ satisfied the finite branch condition.
\end{abstract}

\maketitle

\section{Introduction}

Several authors considered
C$^*$-algebras generated by composition operators
(and Toeplitz operators) on the Hardy space $H^2(\mathbb{D})$ on the open
unit disk
$\mathbb{D}$ (\cite{H1, HW, J1, J2, KMM1, KMM3, KMM2, Pa, Q, Q2, SA}).
On the other hand, there are some studies on C$^*$-algebras generated by
composition operators on $L^2$ spaces, for example \cite{H2, H3, M}.
Matsumoto \cite{M} introduced some
C$^*$-algebras associated with cellular automata generated by composition
operators and multiplication operators.

Let $R$ be a rational function of degree at least two,
let $J_R$ be the Julia set of $R$ and let $\mu^L$ be the Lyubich measure
of $R$. In \cite{H2}, we studied the C$^*$-algebra $\mathcal{MC}_R$
generated by
all multiplication operators by continuous functions in $C(J_R)$
and the composition operator $C_R$ induced by $R$
on $L^2(J_R, \mu^L)$.
We showed that the C$^*$-algebra $\mathcal{MC}_R$ is isomorphic to
the C$^*$-algebra $\mathcal{O}_R (J_R)$ associated with the complex dynamical
system $\{R^{\circ n} \}_{n=1} ^\infty$ introduced in \cite{KW1}.

Let $(K, d)$ be a compact metric space, 
let $\gamma = (\gamma_1, \dots, \gamma_n)$ be a system of proper contractions
on $K$ and let $\varphi: K \to K$ be continuous.
Suppose that $\gamma_1, \dots, \gamma_n$ are inverse branches of
$\varphi$ and  $K$ is self-similar. Let $\mu^H$ be the Huchinson measure of
$\gamma$. In \cite{H3}, we studied the C$^*$-algebra $\mathcal{MC}_\varphi$
generated by
all multiplication operators by continuous functions in $C(K)$
and the composition operator $C_\varphi$ induced by $\varphi$
on $L^2(K, \mu^H)$.
Assume that the system
$\gamma = (\gamma_1, \dots, \gamma_n)$ satisfies the open set condition,
the finite branch condition
and the measure separation condition in $K$. 
We showed that  the C$^*$-algebra $\mathcal{MC}_\varphi$ is isomorphic to
the C$^*$-algebra $\mathcal{O}_\gamma (K)$ associated
with $\gamma$ introduced in \cite{KW2}.

In this paper we consider a generalization of \cite{H3}.
We can remove the second condition
that the system $\gamma$ satisfy the finite branch
condition.
We also show that
$\mathcal{MC}_\varphi$ is isomorphic to $\mathcal{O}_\gamma (K)$
associated with $\gamma$. In this proof, we do not use a countable basis of
a Hilbert bimodule.

There is an applications of the main theorem.
Let $\tau$ be the tent map $\tau: [0,1] \to [0,1]$ 
defined by
\[
  \tau (x) = \begin{cases}
                 2x & 0 \leq x \leq \frac{1}{2}, \\
                 -2x + 2 & \frac{1}{2} \leq x \leq 1
                \end{cases}
\]
and let $\varphi$ be the map $\varphi: [0,1] \times [0, 1] \to
[0,1] \times [0, 1]$ defined by
$\varphi(x, y) = (\tau(x), \tau(y))$ for $x, y \in [0, 1]$.
Suppose that $\gamma_1, \gamma_2, \gamma_3, \gamma_4$
are inverse branches of $\varphi$.
Since the system $\gamma  = (\gamma_1, \gamma_2, \gamma_3, \gamma_4)$
does not satisfy
the finite branch condition, we cannot adapt \cite[Theorem 4.4]{H3}.
However we can adapt this main theorem in this paper and the C$^*$-algebra
$\mathcal{MC}_\varphi$ is isomorphic to the Cuntz algebra
$\mathcal{O}_\infty$.

\section{Covariant relations}

Let $(K,d)$ be a compact metric space. A continuous map $\gamma: K \to K$
is called a {\it proper contraction} if there exists constants
$0 < c_1 \leq c_2 < 1$ such that
\[
   c_1 d(x, y) \leq d(\gamma(x), \gamma(y)) \leq c_2 d(x, y), \quad x, y \in K.
\]

Let $\gamma = (\gamma_1, \dots, \gamma_n)$ be a family of proper contractions
on $(K,d)$. We say that $K$ is called {\it self-similar}
with respect to $\gamma$ if $K = \bigcup_{i = 1} ^n \gamma_i (K)$.
See \cite{F} and \cite{Ki} for more on fractal sets.
We say that $\gamma$ satisfies the {\it open set condition} in $K$
if there exists a non-empty open set $V \subset K$ such that
\[
   \bigcup_{i = 1} ^n \gamma_i(V) \subset V \quad \text{and} \quad
   \gamma_i (V) \cap \gamma_j (V) = \emptyset \quad \text{for} \quad
   i \neq j.
\]

For a system $\gamma$ of proper contractions on a compact metric space $K$,
we introduce the following subsets of $K$.
\begin{align*}
B_\gamma &= \{ y \in K \, | \, y = \gamma_i (x) = \gamma_j (x)
\, \, \text{for some} \, \, x \in K \, \,  \text{and} \, \, i \neq j \}, \\
C_\gamma &= \{ x \in K \, | \, \gamma_i (x) = \gamma_j (x) \, \, 
\text{for some} \, \, i \neq j \}.
\end{align*}
We say that $\gamma$ satisfies the {\it finite branch condition} if $C_\gamma$
is finite set.

Let us denote by $\mathcal{B}(K)$ the Borel $\sigma$-algebra on $K$.

\begin{lem}[\cite{Hu}] \label{lem:Hutchinson measure}
Let $K$ be a compact metric space and let $\gamma$ be a system of proper
contractions. If $p_1, \dots, p_n \in \mathbb{R}$ satisfy
$\sum_{i=1} ^n p_i = 1$ and $p_i > 0$ for $i$,
then there exists a unique probability measure $\mu$ on $K$ such that
\[
   \mu(E) = \sum_{i=1}^n  p_i \mu (\gamma_i ^{-1}(E))
\]
for $E \in \mathcal{B}(K)$.
\end{lem}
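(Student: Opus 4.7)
The plan is to apply the Banach contraction principle to a self-map on the space of Borel probability measures. Write $M_1(K)$ for the set of Borel probability measures on $K$, and define $T \colon M_1(K) \to M_1(K)$ by
\[
   (T\mu)(E) = \sum_{i=1}^n p_i \, \mu(\gamma_i^{-1}(E)) = \sum_{i=1}^n p_i \, ((\gamma_i)_* \mu)(E),
\]
a convex combination of pushforwards. Because each $\gamma_i$ is continuous, $\gamma_i^{-1}(E)$ is Borel for $E \in \mathcal{B}(K)$, and since $\sum_i p_i = 1$, the measure $T\mu$ is again a probability measure. A fixed point of $T$ is exactly a solution of the invariance identity in the statement, so it suffices to produce a unique fixed point of $T$ in $M_1(K)$.

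Next I would equip $M_1(K)$ with the Hutchinson (Kantorovich--Wasserstein) metric
\[
   d_H(\mu,\nu) = \sup \Bigl\{ \Bigl| \int_K f \, d\mu - \int_K f \, d\nu \Bigr| : f \colon K \to \mathbb{R} \text{ is } 1\text{-Lipschitz} \Bigr\}.
\]
Since $K$ is compact, $d_H$ is finite, metrizes the weak-$*$ topology on $M_1(K)$, and renders $(M_1(K), d_H)$ a compact (hence complete) metric space. To show $T$ is a contraction, set $c_2 := \max_i c_2^{(i)} < 1$, where $c_2^{(i)}$ is the upper Lipschitz constant from the proper-contraction definition for $\gamma_i$. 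For any $1$-Lipschitz $f \colon K \to \mathbb{R}$ and any $i$, the composition $f \circ \gamma_i$ is $c_2$-Lipschitz, so $c_2^{-1}(f \circ \gamma_i)$ is $1$-Lipschitz. Using the change-of-variables formula $\int f \, d(\gamma_i)_*\mu = \int f \circ \gamma_i \, d\mu$, linearity, and the definition of $d_H$,
\[
   \Bigl| \int f \, dT\mu - \int f \, dT\nu \Bigr| \le \sum_{i=1}^n p_i \Bigl| \int f \circ \gamma_i \, d(\mu - \nu) \Bigr| \le c_2 \sum_{i=1}^n p_i \, d_H(\mu,\nu) = c_2 \, d_H(\mu,\nu).
\]
Taking the supremum over $f$ gives $d_H(T\mu, T\nu) \le c_2 \, d_H(\mu, \nu)$ with $c_2 < 1$, and Banach's fixed point theorem produces the unique $\mu \in M_1(K)$ with $T\mu = \mu$.

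The main foundational point to check is that $(M_1(K), d_H)$ is complete: this can be done either by invoking Kantorovich duality to identify $d_H$-convergence with weak-$*$ convergence and then applying Prohorov's theorem (tightness is automatic on compact $K$), or by reconstructing the limit of a $d_H$-Cauchy sequence directly from its Lipschitz-test-function values via the Riesz representation theorem. Once this is settled, the remainder of the argument uses the proper-contraction hypothesis only through the uniform bound $c_2 < 1$; the lower bound $c_1 > 0$ and the positivity of the weights $p_i$ play no role in existence or uniqueness, although they matter for subsequent support properties of $\mu$.
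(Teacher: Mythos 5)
Your proof is correct and is essentially the argument of the cited source [Hu]: the paper does not prove this lemma itself but attributes it to Hutchinson, whose proof is exactly this contraction-mapping argument for the Markov operator $\mu \mapsto \sum_i p_i (\gamma_i)_*\mu$ on $M_1(K)$ with the Monge--Kantorovich/Hutchinson metric. All the steps you flag (finiteness and completeness of $d_H$ on compact $K$, the Lipschitz estimate giving the contraction constant $c_2 < 1$) are handled correctly, so there is nothing further to add.
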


We call the measure $\mu$ given by Lemma \ref{lem:Hutchinson measure}
the {\it self-similar measure} on $K$ with $\{p_i \}_{i = 1} ^n$.
In particular, we denote by $\mu^H$ the self-similar measure with
$p_i = \frac{1}{n}$ for $i$ and call this measure the {\it Hutchinson measure}.
We say that $\gamma$ satisfies the {\it measure separation condition} in $K$
if $\mu (\gamma_i (K) \cap \gamma_j (K)) = 0$ for any self-similar measure $\mu$ and $i \neq j$.

For $a \in L^\infty (K, \mathcal{B}(K), \mu^H)$, we define
the multiplication operator $M_a$ on
$L^2 (K, \mathcal{B}(K), \mu^H)$ by
$M_a f = a f$ for $f \in L^2 (K, \mathcal{B}(K), \mu^H)$.
Let $\varphi : K \to K$ be measureable.
Suppose that $\gamma_1, \dots, \gamma_n$ are inverse
branches of $\varphi$, that is, $\varphi (\gamma_i (x)) = x$
for $x \in K$ and $i = 1, \dots, n$.
We define $C_\varphi f = f \circ \varphi$
for $f \in L^2 (K, \mathcal{B}(K)$.
For $f \in L^1 (K, \mathcal{B}(K), \mu^H)$, we define a function $\mathcal{L}_\varphi f: K \to \mathbb{C}$
by
\[
   (\mathcal{L}_\varphi f)(x) = \frac{1}{n} \sum_{i = 1} ^n f(\gamma_i (x)),
   \quad x \in K.
\]
For $f \in C(K)$, we can easily see that $\mathcal{L}_\varphi f \in C(K)$ since $\gamma_1, \dots, \gamma_n$ are continuous functions.

\begin{prop}[{\cite[Proposition 2.5]{H3}}] \label{prop:covariant}
Let $\gamma = (\gamma_1, \dots, \gamma_n)$ be a system of proper contractions.
Assume that $K$ is self-similar and
the system $\gamma = (\gamma_1, \dots, \gamma_n)$ satisfies the
measure separation condition in $K$.
Then $C_\varphi$ is an isometry on $L^2 (K, \mathcal{B}(K), \mu^H)$,
$\mathcal{L}_\varphi$ is bounded on $L^\infty (K, \mathcal{B}(K), \mu^H)$ and
\[
  C_\varphi ^* M_a C_\varphi = M_{\mathcal{L}_\varphi (a)}
\]
for $a \in L^\infty (K, \mathcal{B}(K), \mu^H)$.
\end{prop}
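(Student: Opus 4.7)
The plan is to reduce everything to a single integral identity derived from the Hutchinson invariance relation. Applying $\mu^H(E) = \frac{1}{n}\sum_i \mu^H(\gamma_i^{-1}(E))$ first to characteristic functions and then extending by linearity and monotone convergence yields, for every nonnegative Borel function $g$ on $K$ (and hence for every $g \in L^1(K, \mu^H)$),
\[
  \int_K g\, d\mu^H \;=\; \frac{1}{n}\sum_{i=1}^n \int_K g\circ \gamma_i\, d\mu^H.
\]
Coupled with the inverse-branch identity $\varphi\circ \gamma_i = \mathrm{id}_K$, this is the engine driving all three conclusions.

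First I would dispatch the isometry claim: substituting $g = |f\circ\varphi|^2$ collapses $g\circ\gamma_i$ to $|f|^2$, so $\|C_\varphi f\|_2^2 = \|f\|_2^2$ at the level of specific Borel representatives. Well-definedness of $C_\varphi$ on $L^2$-classes then follows from the identity itself, since $f_1 = f_2$ $\mu^H$-a.e.\ forces $C_\varphi(f_1 - f_2)$ to have zero $L^2$-norm. Next, $\mathcal{L}_\varphi$ obeys the obvious pointwise bound $|(\mathcal{L}_\varphi a)(x)| \leq \|a\|_\infty$; the only subtle point is that $a = 0$ $\mu^H$-a.e.\ must imply $a\circ\gamma_i = 0$ $\mu^H$-a.e.\ for each $i$, which is exactly the integral identity applied to $|a|^2$ (the sum of nonnegative terms vanishes iff each term vanishes).

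The covariant relation is then established by pairing against arbitrary $f,g \in L^2(K,\mu^H)$: expand
\[
\langle C_\varphi^* M_a C_\varphi f, g\rangle \;=\; \int_K a\,(f\circ\varphi)\,\overline{g\circ\varphi}\, d\mu^H,
\]
apply the integral identity to the integrand, and use $\varphi\circ\gamma_i = \mathrm{id}_K$ once more to obtain $\frac{1}{n}\sum_i \int_K (a\circ\gamma_i)\, f\, \bar g\, d\mu^H = \int_K (\mathcal{L}_\varphi a)\, f\, \bar g \, d\mu^H = \langle M_{\mathcal{L}_\varphi a} f, g\rangle$. I do not expect any individual step to be genuinely hard; the only real care required is the bookkeeping of equivalence classes, and this is precisely where the measure separation condition pulls its weight, being the hypothesis which (via Lemma \ref{lem:Hutchinson measure} and the identity above) ensures that operators built from $\varphi$ are well-defined on $L^2(K,\mu^H)$ and $L^\infty(K,\mu^H)$ rather than only on pointwise functions.
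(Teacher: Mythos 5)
Your argument is correct, but note that the paper itself offers no proof to compare against: Proposition \ref{prop:covariant} is imported verbatim from \cite[Proposition 2.5]{H3}. Your route --- dualizing the Hutchinson relation $\mu^H(E)=\frac{1}{n}\sum_i\mu^H(\gamma_i^{-1}(E))$ into the change-of-variables identity $\int_K g\,d\mu^H=\frac{1}{n}\sum_{i=1}^n\int_K g\circ\gamma_i\,d\mu^H$ and then running every claim (isometry, well-definedness, boundedness of $\mathcal{L}_\varphi$, and the covariance relation tested weakly against $f,g\in L^2$) through that single identity together with $\varphi\circ\gamma_i=\mathrm{id}_K$ --- is clean and self-contained, and it avoids any explicit formula for $C_\varphi^*$ and any decomposition of $K$ into the pieces $\gamma_i(K)$, which is where arguments in the Kajiwara--Watatani style typically have to control the overlaps $\gamma_i(K)\cap\gamma_j(K)$. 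The one point you should fix is your closing sentence: your proof never actually invokes the measure separation condition. Well-definedness of $C_\varphi$ on $L^2$-classes follows from $\mu^H(\varphi^{-1}(N))=\mu^H(N)$ (take $g=\chi_{\varphi^{-1}(N)}$ in the identity and use $\chi_{\varphi^{-1}(N)}\circ\gamma_i=\chi_N$), and well-definedness of $\mathcal{L}_\varphi$ on $L^\infty$-classes follows from $\mu^H(\gamma_i^{-1}(N))\le n\,\mu^H(N)$; both are consequences of the Hutchinson relation alone. So either state explicitly that this hypothesis is not needed for the proposition as you prove it, or delete the claim that this is ``precisely where the measure separation condition pulls its weight'' --- as written, that sentence attributes to the hypothesis a role that your own argument does not give it.
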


The operator $C_\varphi$ is called
the {\it composition operator}
on $L^2 (K, \mathcal{B}(K), \mu^H)$ induced by $\varphi$.

\section{C$^*$-algebras associated with self-similar sets}

We recall the construction of Cuntz-Pimsner algebras \cite{Pi} (see also
\cite{K}). 
Let $A$ be a C$^*$-algebra and let $X$ be a right Hilbert $A$-module.
We denote by $\mathcal{L}(X)$ the C$^*$-algebra of the adjointable bounded operators 
on $X$.  
For $\xi$, $\eta \in X$, the operator $\theta _{\xi,\eta}$
is defined by $\theta _{\xi,\eta}(\zeta) = \xi \langle \eta, \zeta \rangle_A$
for $\zeta \in X$. 
The closure of the linear span of these operators is denoted by $\mathcal{K}(X)$. 
We say that 
$X$ is a {\it Hilbert bimodule} (or {\it C$^*$-correspondence}) 
over $A$ if $X$ is a right Hilbert $A$-module 
with a $*$-homomorphism $\phi : A \rightarrow \mathcal{L}(X)$.
We always assume that $\phi$ is injective. 

A {\it representation} of the Hilbert bimodule $X$
over $A$ on a C$^*$-algebra $D$
is a pair $(\rho, V)$ constituted by a $*$-homomorphism $\rho: A \to D$ and
a linear map $V: X \to D$ satisfying
\[
  \rho(a) V_\xi = V_{\phi(a) \xi}, \quad
  V_\xi ^* V_\eta = \rho( \langle \xi, \eta \rangle_A)
\]
for $a \in A$ and $\xi, \eta \in X$.
It is known that $V_\xi \rho(b) = V_{\xi b}$ follows
automatically (see for example \cite{K}).
We define a $*$-homomorphism $\psi_V : \mathcal{K}(X) \to D$
by $\psi_V ( \theta_{\xi, \eta}) = V_{\xi} V_{\eta}^*$ for $\xi, \eta \in X$
(see for example \cite[Lemma 2.2]{KPW}).
A representation $(\rho, V)$ is said to be {\it covariant} if
$\rho(a) = \psi_V(\phi(a))$ for all $a \in J(X)
:= \phi ^{-1} (\mathcal{K}(X))$.

Let $(i, S)$ be the representation of $X$ which is universal for
all covariant representations. 
The {\it Cuntz-Pimsner algebra} ${\mathcal O}_X$ is 
the C$^*$-algebra generated by $i(a)$ with $a \in A$ and 
$S_{\xi}$ with $\xi \in X$.
We note that $i$ is known to be injective
\cite{Pi} (see also \cite[Proposition 4.11]{K}).
We usually identify $i(a)$ with $a$ in $A$.

Let $\gamma = (\gamma_1, \dots, \gamma_n)$ be a system of proper contractions
on a compact metric space $K$.
Assume that $K$ is self-similar.
Let $A = C(K)$ and $Y = C(\mathcal{C})$,
where $\mathcal{C} = \bigcup_{i = 1} ^n \{ (\gamma_i(y), y) \, | \, y
\in K \}$  is the cograph of $\gamma_i$.
Then $Y$ is an $A$-$A$ bimodule over $A$ by 
\[
  (a \cdot f \cdot b)(\gamma_i(y),y) = a(\gamma_i(y)) f(\gamma_i(y),y) b(y),\quad a, b \in A, \, 
  f \in Y.
\]
We define an $A$-valued inner product $\langle \ , \ \rangle_A$ on $Y$ by 
\[
  \langle f, g \rangle_A (y) = \sum _{i=1} ^n
  \overline{f(\gamma_i (y), y)} g(\gamma_i(y), y),
  \quad f, g \in Y, \, y \in K.
\]
Then $Y$ is a Hilbert bimodule over $A$. 
The C$^*$-algebra ${\mathcal O}_\gamma (K)$
is defined as the Cuntz-Pimsner algebra of the Hilbert bimodule 
$Y = C(\mathcal{C})$ over $A = C(K)$.

\section{Main theorem}

\begin{defn}
Let $\varphi : K \to K$ be continuous.
Suppose that composition operator $C_\varphi$ on
$L^2(K, \mathcal{B}(K), \mu^{H})$ is bounded.
We denote by $\mathcal{MC}_\varphi$ the C$^*$-algebra
generated by all multiplication operators by
continuous functions in $C(K)$
and the composition operator $C_\varphi$ on $L^2(K, \mathcal{B}(K), \mu^{H})$.
\end{defn}

Let $\varphi:K \to K$ be continuous.
Let $A = C(K)$ and $X = C(K)$.
Then $X$ is an $A$-$A$ bimodule over $A$ by 
\[
   (a \cdot \xi \cdot b) (x) = a(x) \xi(x) b(\varphi(x)) \quad a,b \in A, \, 
  \xi \in X.
\]
We define an $A$-valued inner product $\langle \ , \ \rangle_A$ on $X$ by
\[
  \langle \xi, \eta \rangle _A (x) = \frac{1}{n} \sum_{i = 1} ^n
  \overline{\xi(\gamma_i (x))} \eta(\gamma_i(x))
  \, \,
  \left (\, = (\mathcal{L}_\varphi (\overline{\xi} \eta))(x) \, \right ), \quad
  \xi, \eta \in X.
\]
Then $X$ is a Hilbert bimodule over $A$.
The left multiplication of $A$ on $X$ gives the left action
$\phi: A \to \mathcal{L}(X)$ such that $(\phi(a) \xi)(x) = a(x) \xi(x)$
for $a \in A$ and $\xi \in X$.
Let $\Phi: Y \to X$ be defined by
$(\Phi(f))(x) = \sqrt{n} f(x, \varphi(x))$ for $f \in Y$.
It is easy to see that $\Phi$ is an isomorphism and $X$ is
isomorphic to $Y$ as Hilbert bimodules over $A$.
Hence the C$^*$-algebra $\mathcal{O}_\gamma (K)$ is isomorphic to
the Cuntz-Pimsner algebra $\mathcal{O}_X$ constructed from $X$.

For $x \in K$, we define
\[
   I(x) = \{ i \in \{1, \dots, n \} \, | \, \text{there exists} \, \,
   y \in K \, \, \text{such that} \, \, x = \gamma_i(y) \}.
\]

\begin{lem}[{\cite[Lemma 2.2]{KW2}}] \label{lem:nbd}
Let $x \in K \smallsetminus B_\gamma$.
Then there exists an open neighbourhood $U_x$ of $x$
the following

\begin{enumerate}
\item $U_x
\cap B(\gamma_1, \dots, \gamma_n) = \emptyset$.

\item
If $i \in I(x)$, then $\gamma_j (\gamma_i^{-1}(U_x)) \cap U_x = \emptyset$ for
$j \neq i$.

\item
If $i \notin I(x)$, then $U_x \cap \gamma_i (K) = \emptyset$.
\end{enumerate}

\end{lem}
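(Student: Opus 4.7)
The plan is to produce three open neighborhoods of $x$, each taking care of one of the conditions (1)--(3), and then take their finite intersection as $U_x$. All three make essential use of compactness of $K$ and the contractive behaviour of the $\gamma_i$; the hypothesis $x \notin B_\gamma$ enters only in condition (2).

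For (1), I would first check that $B_\gamma$ is closed. For each pair $i \neq j$, the set $D_{ij} = \{y \in K : \gamma_i(y) = \gamma_j(y)\}$ is the preimage of the diagonal under the continuous map $y \mapsto (\gamma_i(y), \gamma_j(y))$, hence closed, so $\gamma_i(D_{ij})$ is compact and the finite union $B_\gamma = \bigcup_{i \neq j} \gamma_i(D_{ij})$ is closed. Since $x \notin B_\gamma$, its complement provides an open neighborhood $U_x^{(1)}$ of $x$ disjoint from $B_\gamma$. For (3), each $\gamma_i(K)$ is compact (hence closed) and does not contain $x$ when $i \notin I(x)$, so I can pick an open neighborhood $W_i$ of $x$ with $W_i \cap \gamma_i(K) = \emptyset$ and intersect over the finitely many such $i$ to obtain $U_x^{(3)}$.

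Condition (2) is the heart of the matter. For each $i \in I(x)$, the contraction $\gamma_i$ is injective, so there is a unique $y_i \in K$ with $\gamma_i(y_i) = x$, and the lower contraction bound $c_1 d(a,b) \leq d(\gamma_i(a), \gamma_i(b))$ forces $\gamma_i^{-1} : \gamma_i(K) \to K$ to be Lipschitz. The key use of $x \notin B_\gamma$ is the following: for each $j \in I(x)$ with $j \neq i$, one must have $\gamma_j(y_i) \neq x$, for otherwise $y_i$ itself witnesses $\gamma_i(y_i) = \gamma_j(y_i) = x$ and $x \in B_\gamma$, a contradiction. Now continuity of $\gamma_j$ together with Lipschitz control of $\gamma_i^{-1}$ yields $r_{i,j} > 0$ such that whenever $u$ lies in the ball $B(x, r_{i,j}) \cap \gamma_i(K)$, the point $\gamma_j(\gamma_i^{-1}(u))$ stays bounded away from $x$, in particular outside $B(x, r_{i,j})$. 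For $j \notin I(x)$, the corresponding case of (2) follows automatically from (3) since $\gamma_j(\gamma_i^{-1}(U_x)) \subseteq \gamma_j(K)$. Taking the finite intersection of these finitely many balls over $i \in I(x)$ and $j \neq i$ produces $U_x^{(2)}$.

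Setting $U_x = U_x^{(1)} \cap U_x^{(2)} \cap U_x^{(3)}$ gives the required neighborhood. The principal obstacle is the quantitative argument in (2): one has to shrink $U_x$ uniformly in $i$ and $j$ so that the branch-swap $\gamma_j \circ \gamma_i^{-1}$ carries points of $U_x \cap \gamma_i(K)$ outside $U_x$ again, and this is precisely where the separation $\gamma_j(y_i) \neq x$ coming from $x \notin B_\gamma$ is indispensable.
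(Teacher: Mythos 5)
Your argument is correct. The paper itself offers no proof of this lemma---it is imported verbatim from \cite[Lemma 2.2]{KW2}---and your reconstruction (closedness of $B_\gamma = \bigcup_{i \neq j}\gamma_i(D_{ij})$ via compactness, injectivity and Lipschitz invertibility of each $\gamma_i$ from the lower contraction bound, the observation that $\gamma_j(y_i) = x$ would force $x \in B_\gamma$, and the quantitative shrinking of the ball so that $\gamma_j \circ \gamma_i^{-1}$ carries $U_x \cap \gamma_i(K)$ off $U_x$) is precisely the standard argument given there.
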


We now recall a description of the ideal $J(X)$ of $A$.
Assume that $\gamma = (\gamma_1, \dots, \gamma_n)$ satisfies the open set
condition in $K$.
By \cite[Proposition 2.6]{KW2}, we can write
$J(X) = \{ a \in A \, | \,
a \, \, \text{vanishes on} \, \, B_\gamma \}$.
We define a subset $J(X)^0$ of $J(X)$
by $J(X) ^0 = \{ a \in A \, | \,
a \, \, \text{vanishes on} \, \, B_\gamma \, \,
\text{and has compact support on } K \smallsetminus
B_\gamma \}$.
Then $J(X)^0$ is dense in $J(X)$.

\begin{lem} \label{lem:key_lem}
Assume that $\gamma = (\gamma_1, \dots, \gamma_n)$ satisfies the open set
condition in $K$.
Then, for $a \in J(X)^0$, there exists $\xi_1, \dots, \xi_m , \eta_1, \dots,
\eta_m \in X$ such that
\[
   \sum_{i = 1} ^m \theta_{\xi_i, \eta_i} = \phi(a).
\]
\end{lem}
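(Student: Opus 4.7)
The plan is to reduce, via a partition of unity furnished by Lemma \ref{lem:nbd}, to the case where $a$ is supported in a single neighborhood $U_{x_0}$, and then exhibit one rank-one operator $\theta_{\xi,\eta}$ that equals $\phi(a)$ in that chart.

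Since $a \in J(X)^0$ has compact support inside $K \smallsetminus B_\gamma$, I would cover $\operatorname{supp}(a)$ by finitely many open sets $U_{x_1}, \dots, U_{x_m}$ as in Lemma \ref{lem:nbd}, and choose continuous functions $\chi_k \geq 0$ with $\operatorname{supp}(\chi_k) \subset U_{x_k}$ and $\sum_k \chi_k = 1$ on $\operatorname{supp}(a)$. Writing $a = \sum_k \chi_k a$, each summand vanishes on $B_\gamma$ (by Lemma \ref{lem:nbd}(1) applied to $U_{x_k}$) and has support which is a compact subset of $U_{x_k}$; hence $\chi_k a \in J(X)^0$. Since $\phi$ is linear, it suffices to exhibit one operator $\theta_{\xi_k, \eta_k}$ for each $\chi_k a$.

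Assume henceforth that $\operatorname{supp}(a) \subset U := U_{x_0}$. I would next show that a unique branch $\gamma_{i_0}$ is relevant on $U$. The self-similarity $K = \bigcup_i \gamma_i(K)$ gives $I(x) \neq \emptyset$ for every $x$; Lemma \ref{lem:nbd}(3) gives $I(x) \subset I(x_0)$ for $x \in U$; and Lemma \ref{lem:nbd}(1) forces $|I(x_0)| = 1$, say $I(x_0) = \{i_0\}$. Thus $I(x) = \{i_0\}$ throughout $U$, so $x = \gamma_{i_0}(\varphi(x))$ whenever $x \in U$, and Lemma \ref{lem:nbd}(2) yields $\gamma_j(\varphi(x)) \notin U$ for every $x \in U$ and $j \neq i_0$.

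Now set $\xi = n\,a \in X$ and, via Urysohn's lemma, pick $\eta \in C(K)$ with $\eta \equiv 1$ on $\operatorname{supp}(a)$ and $\operatorname{supp}(\eta) \subset U$. For $\zeta \in X$ and $x \in K$ one computes
\[
(\theta_{\xi,\eta}\zeta)(x) = \xi(x) \cdot \langle \eta, \zeta\rangle_A(\varphi(x))
 = \xi(x) \cdot \frac{1}{n}\sum_{j=1}^n \overline{\eta(\gamma_j(\varphi(x)))}\,\zeta(\gamma_j(\varphi(x))).
\]
If $x \notin U$ then $\xi(x) = 0$; if $x \in U$ the previous paragraph kills every term with $j \neq i_0$, leaving $\xi(x) \cdot \tfrac{1}{n}\overline{\eta(x)}\,\zeta(x) = a(x)\zeta(x)$, using $\eta(x) = 1$ on $\operatorname{supp}(a)$ and $\xi(x) = 0$ off it. Hence $\theta_{\xi,\eta} = \phi(a)$, and summing over the partition yields the claimed finite-rank representation. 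The main (and essentially only) obstacle is the collapse of the inner-product sum to a single term; the three properties of Lemma \ref{lem:nbd} are tailor-made for this collapse, and the remaining ingredients (partition of unity, Urysohn cut-off) are standard in a compact metric space.
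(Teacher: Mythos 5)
Your proof is correct and follows essentially the same route as the paper: cover $\operatorname{supp}(a)$ by the neighbourhoods of Lemma \ref{lem:nbd}, take a subordinate partition of unity, and use properties (1)--(3) (together with $\varphi\circ\gamma_i=\mathrm{id}$) to collapse the inner-product sum to the single branch $\gamma_{i_0}$ on each chart. The only cosmetic difference is your choice of the pair $(\xi_k,\eta_k)=(n\chi_k a,\ \text{Urysohn cut-off})$ where the paper (following \cite[Proposition 2.4]{KW2}) takes $\xi_k=na\sqrt{f_k}$ and $\eta_k=\sqrt{f_k}$.
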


\begin{proof}
For $x \in {\rm supp}(a)$, choose an open neighbourhood $U_x$
of $x$ as in Lemma \ref{lem:nbd}. By the same argument in the proof of
\cite[Proposition 2.4]{KW2}, we can choose $\{f_i \}_{i = 1} ^{m+1}$
in $A$ such that $0 \leq f_i \leq 1, \, {\rm supp}(f_i) \subset U_{x_i}$
for $i = 1, \dots, m$ and $\sum_{i = 1} ^m f_i(x) = 1$ for $x \in {\rm supp}(a)$.
Define $\xi_i, \eta_i \in X$ by $\xi_i(x) = n a(x) \sqrt{f_i (x)}$ and
$\eta_i (x) = \sqrt{f_i(x)}$. By a similar argument in the proof of
\cite[Proposition 2.4]{KW2}, we have
$\sum_{i = 1} ^m \theta_{\xi_i, \eta_i} = \phi(a)$.
\end{proof}

We regard the following lemma as
an operator version of Lemma \ref{lem:key_lem}.

\begin{lem} \label{lem:operator}
Let $\gamma = (\gamma_1, \dots, \gamma_n)$
be a system of proper contractions.
Assume that $K$ is self-similar and
the system $\gamma = (\gamma_1, \dots, \gamma_n)$ satisfies
the open set condition and
the measure separation condition in $K$.
Suppose that $\xi_1, \dots, \xi_m , \eta_1, \dots, \eta_m \in X$
are in Lemma \ref{lem:key_lem}.
Then, for $a \in J(X)^0$, we have
\[
   \sum_{i = 1} ^m M_{\xi_i} C_\varphi C_\varphi ^* M_{\eta_i} ^* = M_a.
\]
\end{lem}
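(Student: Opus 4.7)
The plan is to exhibit $(M, V)$ with $V_\xi := M_\xi C_\varphi$ as a representation of the Hilbert bimodule $X$ on $L^2(K,\mu^H)$, which reduces the claim to a pointwise identity. The relations $M_a V_\xi = V_{\phi(a)\xi}$ and $V_\xi^* V_\eta = M_{\langle \xi, \eta\rangle_A}$ follow directly from the definitions of $X$ and its $A$-valued inner product, the second one being a restatement of Proposition \ref{prop:covariant} applied to $\overline{\xi}\eta$. Then $\sum_i M_{\xi_i} C_\varphi C_\varphi^* M_{\eta_i}^* = \sum_i V_{\xi_i} V_{\eta_i}^*$, and what must be proved is that this operator equals $M_a$.

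To carry this out, I first identify $C_\varphi^*$ pointwise. Applying Proposition \ref{prop:covariant} to the constant function $1 \in L^2(K,\mu^H)$ yields $C_\varphi^* g = \mathcal{L}_\varphi g$ for $g \in L^\infty$, and this extends to $L^2$ by density together with the contractivity estimate $|\mathcal{L}_\varphi g|^2 \le \mathcal{L}_\varphi(|g|^2)$ integrated against $\mu^H$. For $h \in C(K)$ this produces the explicit formula
\[
(M_{\xi_i} C_\varphi C_\varphi^* M_{\eta_i}^* h)(x) = a(x)\sqrt{f_i(x)} \sum_{j=1}^n \sqrt{f_i(\gamma_j(\varphi(x)))}\, h(\gamma_j(\varphi(x))).
\]
The main step is the geometric collapse of the inner sum. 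The measure separation condition forces $\mu^H$-a.e.\ $x$ to lie in a unique $\gamma_k(K)$, and the inverse-branch identity then gives $\varphi(x) = \gamma_k^{-1}(x)$ and $\gamma_k(\varphi(x)) = x$. When in addition $x \in \mathrm{supp}(f_i) \subset U_{x_i}$, part (3) of Lemma \ref{lem:nbd} forces $k \in I(x_i)$, and part (2) then forces $\gamma_j(\varphi(x)) \notin U_{x_i}$ for every $j \neq k$, so $f_i(\gamma_j(\varphi(x))) = 0$ for $j \neq k$. Only the $j = k$ term survives, yielding $(M_{\xi_i} C_\varphi C_\varphi^* M_{\eta_i}^* h)(x) = a(x) f_i(x) h(x)$ $\mu^H$-a.e.

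Summing over $i$ and using $\sum_{i=1}^m f_i = 1$ on $\mathrm{supp}(a)$, together with $a = 0$ elsewhere, gives $a(x) h(x)$ a.e., so the identity holds on the dense subspace $C(K) \subset L^2(K,\mu^H)$ and therefore everywhere by boundedness. I expect the main obstacle to be precisely the geometric step in the previous paragraph: using the two neighbourhood conditions of Lemma \ref{lem:nbd} together with the inverse-branch identity $\varphi \circ \gamma_k = \mathrm{id}$ to eliminate all but the tautological $j = k$ term, while being careful about the $\mu^H$-null set where $x$ lies in more than one $\gamma_k(K)$.
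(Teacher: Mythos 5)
Your proof is correct, but it takes a genuinely different route from the paper's. The paper argues purely algebraically: writing $b = M_b C_\varphi 1$ for $b \in C(K)$ and applying the covariance relation $C_\varphi^* M_c C_\varphi = M_{\mathcal{L}_\varphi(c)}$ of Proposition \ref{prop:covariant}, it computes $\sum_i M_{\xi_i} C_\varphi C_\varphi^* M_{\eta_i}^* b = \sum_i \xi_i \cdot \langle \eta_i, b\rangle_A$, which equals $ab$ because $\sum_i \theta_{\xi_i,\eta_i} = \phi(a)$ by Lemma \ref{lem:key_lem}; the geometric content is thus entirely quarantined inside Lemma \ref{lem:key_lem}, and the argument works for \emph{any} $\xi_i, \eta_i$ realizing $\phi(a)$ as a finite sum of rank-one operators. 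You instead identify $C_\varphi^*$ pointwise as $\mathcal{L}_\varphi$, substitute the explicit $\xi_i = na\sqrt{f_i}$, $\eta_i = \sqrt{f_i}$, and re-run the neighbourhood argument of Lemma \ref{lem:nbd} (uniqueness of the branch $\gamma_k$ containing $x$ up to a $\mu^H$-null set, parts (2) and (3) killing the $j \neq k$ terms) directly at the level of $L^2$ functions. This is valid --- the one point to be careful about, which you handle, is that the collapse only holds $\mu^H$-a.e.\ rather than everywhere --- but it essentially re-proves the pointwise identity $\sum_i \xi_i\langle\eta_i, b\rangle_A = ab$ underlying Lemma \ref{lem:key_lem} instead of invoking it, and it is tied to the particular choice of $\xi_i, \eta_i$. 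What your version buys is a concrete, self-contained picture of why the operator identity holds; what the paper's version buys is brevity, independence from the specific construction, and a clean separation between the module-theoretic statement and its operator realization.
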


\begin{proof}
Let $b \in C(K)$.
By Lemma \ref{lem:key_lem}, there exists
$\xi_1, \dots, \xi_m , \eta_1, \dots, \eta_m \in X$ such that
\[
   \sum_{i = 1} ^m \xi_i \cdot \langle \eta_i, b \rangle_A = a b.
\]
Since $b = M_b C_\varphi 1$, we have
\begin{align*}
\sum_{i = 1} ^m M_{\xi_i} C_\varphi C_\varphi ^* M_{\eta_i} ^* b
&= \sum_{i = 1} ^m M_{\xi_i} C_\varphi C_\varphi ^* M_{\eta_i} ^* M_b C_\varphi 1 \\
&= \sum_{i = 1} ^m M_{\xi_i} C_\varphi C_\varphi ^* M_{\overline \eta_i b} C_\varphi 1 \\
&= \sum_{i = 1} ^m M_{\xi_i} C_\varphi M_{\mathcal{L}_\varphi (\overline \eta_i b)} 1
   \quad \quad \text{by Proposition \ref{prop:covariant}} \\
&= \sum_{i = 1} ^m M_{\xi_i} M_{\mathcal{L}_\varphi (\overline \eta_i b) \circ \varphi} C_\varphi 1 \\
&= \sum_{i = 1} ^m \xi_i \mathcal{L}_\varphi (\overline \eta_i b) \circ \varphi \\
&= \sum_{i = 1} ^m \xi_i \cdot \langle \eta_i, b \rangle_A \\
&= M_a b.
\end{align*}
Since the Hutchinson measure $\mu^H$ on $K$ is regular, $C(K)$ is dense in
$L^2 (K, \mathcal{B}(K), \mu^H)$, the proof is complete.

\end{proof}

The following theorem is the main result of the paper.
This is a generalization of \cite[Theorem 4.4]{H3}.

\begin{thm} \label{thm:main}
Let $(K, d)$ be a compact metric space, 
let $\gamma = (\gamma_1, \dots, \gamma_n)$ be a system of proper contractions
on $K$ and let $\varphi: K \to K$ be continuous.
Suppose that $\gamma_1, \dots, \gamma_n$ are inverse branches of
$\varphi$. Assume that $K$ is self-similar and the system
$\gamma = (\gamma_1, \dots, \gamma_n)$ satisfies the open set condition
and the measure separation condition in $K$. 
Then $\mathcal{MC}_\varphi$ is isomorphic to $\mathcal{O}_\gamma (K)$.
\end{thm}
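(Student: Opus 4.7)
The plan is to construct a covariant representation of the Hilbert bimodule $X = C(K)$ on $B(L^2(K, \mathcal{B}(K), \mu^H))$ whose image is exactly $\mathcal{MC}_\varphi$, and then invoke the gauge-invariant uniqueness theorem of \cite{K} to conclude that the induced map from $\mathcal{O}_X$ is an isomorphism. Combined with the isomorphism $\mathcal{O}_\gamma(K) \cong \mathcal{O}_X$ coming from $\Phi$, this will yield the theorem.

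Define $\rho : A \to \mathcal{MC}_\varphi$ by $\rho(a) = M_a$ and $V : X \to \mathcal{MC}_\varphi$ by $V_\xi = M_\xi C_\varphi$. The relation $\rho(a) V_\xi = V_{\phi(a)\xi}$ is immediate from $M_a M_\xi = M_{a\xi}$, and by Proposition \ref{prop:covariant} one has
\[
V_\xi^* V_\eta = C_\varphi^* M_{\overline{\xi}\eta} C_\varphi = M_{\mathcal{L}_\varphi(\overline{\xi}\eta)} = \rho(\langle \xi, \eta\rangle_A),
\]
so $(\rho, V)$ is a representation of $X$. For covariance, let $a \in J(X)^0$: Lemma \ref{lem:key_lem} supplies $\xi_i, \eta_i \in X$ with $\phi(a) = \sum_i \theta_{\xi_i, \eta_i}$, and then Lemma \ref{lem:operator} gives
\[
\psi_V(\phi(a)) = \sum_i V_{\xi_i} V_{\eta_i}^* = \sum_i M_{\xi_i} C_\varphi C_\varphi^* M_{\eta_i}^* = M_a = \rho(a).
\]
Since $J(X)^0$ is dense in $J(X)$ and both $\rho$ and $\psi_V \circ \phi$ are norm continuous, $\rho(a) = \psi_V(\phi(a))$ extends to all $a \in J(X)$.

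By the universal property of the Cuntz-Pimsner algebra, $(\rho, V)$ induces a $*$-homomorphism $\pi : \mathcal{O}_X \to \mathcal{MC}_\varphi$. It is surjective because its image contains $M_a = \pi(a)$ for every $a \in C(K)$ and $C_\varphi = V_1 = \pi(S_1)$. To show $\pi$ is injective, I would apply the gauge-invariant uniqueness theorem: $\rho$ is visibly injective because the Hutchinson measure $\mu^H$ has full support on $K$ under the open set condition, so $M_a = 0$ forces $a = 0$; the remaining task is to produce a gauge action $\alpha : \mathbb{T} \to \mathrm{Aut}(\mathcal{MC}_\varphi)$ with $\alpha_z(M_a) = M_a$ and $\alpha_z(C_\varphi) = z C_\varphi$.

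The main obstacle is the construction of this gauge action. Following the strategy of \cite{H3}, I would describe a dense $*$-subalgebra of $\mathcal{MC}_\varphi$ as the linear span of monomials $M_a C_\varphi^k (C_\varphi^*)^\ell$ and verify that the substitution $C_\varphi \mapsto z C_\varphi$ preserves every linear relation among them, using the covariance identity of Proposition \ref{prop:covariant} to reduce to a canonical form and then separating by degree. The crucial new ingredient, relative to \cite{H3}, is Lemma \ref{lem:operator}: it replaces the countable-basis computation that was previously used to realize $\phi(a) \in \mathcal{K}(X)$ spatially, and it is precisely this replacement that allows the finite branch condition to be dropped.
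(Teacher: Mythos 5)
Your construction of the representation $(\rho,V)$ with $\rho(a)=M_a$, $V_\xi=M_\xi C_\varphi$, the verification of $V_\xi^*V_\eta=\rho(\langle\xi,\eta\rangle_A)$ via Proposition \ref{prop:covariant}, and the covariance on $J(X)$ via Lemmas \ref{lem:key_lem} and \ref{lem:operator} plus density of $J(X)^0$ all coincide with the paper's argument, and this part is correct. The divergence, and the gap, is in the final step: injectivity of the induced surjection $\pi:\mathcal{O}_X\to\mathcal{MC}_\varphi$. The paper does not use the gauge-invariant uniqueness theorem at all; it invokes the simplicity of $\mathcal{O}_\gamma(K)$ under the open set condition (\cite[Theorem 3.8]{KW2}), so that the nonzero $*$-homomorphism $\pi$ is automatically injective. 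You instead propose gauge-invariant uniqueness, correctly identify that this requires a gauge action $\alpha_z$ on $\mathcal{MC}_\varphi$ fixing the $M_a$ and sending $C_\varphi\mapsto zC_\varphi$, call its construction ``the main obstacle'' --- and then do not construct it. As written, the proof is incomplete at exactly the point where all the work remains.

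The missing step is not routine. A spatial implementation would require a unitary commuting with every $M_a$, hence of the form $M_u$ with $|u|=1$, satisfying $u\cdot\overline{u\circ\varphi}=z$ almost everywhere --- a cohomological equation with no evident solution for general $z\in\mathbb{T}$. Your fallback plan (span the monomials $M_aC_\varphi^k(C_\varphi^*)^\ell$, put words in canonical form, and check that $C_\varphi\mapsto zC_\varphi$ preserves all linear relations ``by separating by degree'') is essentially a hand-proof of a uniqueness theorem for this concrete algebra; separating the degree-$(k-\ell)$ components of an operator identity in $B(L^2)$ requires an averaging argument that presupposes the very gauge action you are trying to build. So either carry out that construction in full, or replace the whole injectivity step by the paper's shorter route: note that $\pi\neq 0$ and cite the simplicity of $\mathcal{O}_\gamma(K)$ from \cite[Theorem 3.8]{KW2}, which is available precisely under the open set condition assumed in the theorem. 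Your closing remark --- that Lemma \ref{lem:operator} is what replaces the countable-basis computation of \cite{H3} and lets the finite branch condition be dropped --- is accurate and matches the paper's stated motivation.
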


\begin{proof}
Put $\rho(a) = M_a$ and $V_\xi = M_\xi C_\varphi$ for $a \in A$ and
$\xi \in X$.
Then we can show $\rho(a) V_\xi = V_{a \cdot \xi}$ and
$V_\xi ^* V_\eta = \rho ( \langle \xi, \eta \rangle_A)$
for $a \in A$ and $\xi, \eta \in X$ as in the proof of
\cite[Theorem 4.4]{H3}.

Let $a \in J(X)^0$. By Lemma \ref{lem:key_lem},
there exits $\xi_1, \dots, \xi_m , \eta_1, \dots, \eta_m \in X$ such that
$\phi(a) = \sum_{i = 1} ^m \theta_{\xi_i, \eta_i}$.
From Lemma \ref{lem:operator} it follows that
\[
  \psi_V( \phi(a) )
  = \psi_V \left( \sum_{i = 1} ^m \theta_{\xi_i, \eta_i} \right)
  = \sum_{i = 1} ^m V_\xi V_{\eta_i} ^*
  = \sum_{i = 1} ^m M_{\xi_i} C_\varphi C_\varphi ^* M_{\eta_i} ^*
  = M_a
  = \rho(a).
\]
Since $J(X)^0$ is dense in $J(X)$, we have $\psi_V(\phi(a)) = \rho(a)$
for $a \in J(X)$.

By the universality and the simplicity of $\mathcal{O}_\gamma (K)$
(\cite[Theorem 3.8]{KW2}),
the C$^*$-algebra $\mathcal{MC}_\varphi$ is isomorphic to
$\mathcal{O}_\gamma (K)$.

\end{proof}

\section{Examples} \label{sec:Ex}

We give some examples for C$^*$-algebras generated by a composition
operator $C_\varphi$ and multiplication operators.

\begin{ex}
A tent map $\tau: [0,1] \to [0,1]$ is defined by
\[
  \tau (x) = \begin{cases}
                 2x & 0 \leq x \leq \frac{1}{2}, \\
                 -2x + 2 & \frac{1}{2} \leq x \leq 1.
                \end{cases}
\]
Let $\varphi: [0, 1] \times [0, 1] \to [0, 1] \times [0, 1]$ be given by
$\varphi(x, y) = (\tau(x), \tau(y))$ for $x, y \in [0,1]$.
Then inverse branches of $\varphi$ are $\gamma_1, \gamma_2, \gamma_3, \gamma_4$ such that
\begin{alignat*}{2}
\gamma_1 (x, y) &= \left( \frac{1}{2} x, \, \frac{1}{2} y \right), &\quad
\gamma_2 (x, y) &= \left( \frac{1}{2} x , \, - \frac{1}{2} y  + 1 \right), \\
\gamma_3 (x, y) &= \left( - \frac{1}{2} x + 1, \,  \frac{1}{2} y \right), &\quad
\gamma_4 (x, y) &= \left( - \frac{1}{2} x + 1, \, - \frac{1}{2} y  + 1 \right)
\end{alignat*} 
for $x, y \in [0, 1]$.
The maps $\gamma_1, \gamma_2, \gamma_3, \gamma_4$
are proper contractions and $K = [0, 1] \times [0, 1]$
is the self-similar set
with respect to $\gamma =  (\gamma_1, \gamma_2, \gamma_3, \gamma_4)$.
The system $\gamma$ satisfies
the open set condition in $K$.
Since
$C_\gamma = ([0, 1] \times \{ 1 \}) \cup (\{ 1 \} \times [0, 1])$,
the system $\gamma$ does not satisfy the finite branch condition.
The Hutchinson measure $\mu^H$ on $K$ coincides with
the Lebesgue measure $m$ on $K$. 
By \cite{Sc}, 
the system $\gamma$ satisfies the measure
separation condition in $K$.
We consider the composition operator $C_\varphi$ on
$L^2 (K, \mathcal{B}(K), m)$. By Theorem \ref{thm:main},
the C$^*$-algebra $\mathcal{MC}_\varphi$ is isomorphic to
$\mathcal{O}_\gamma (K)$.

Since
\[
  B_\gamma = \left( [0, 1] \times \left\{ \frac{1}{2} \right\} \right)
  \cup \left( \left\{ \frac{1}{2} \right\} \times [0, 1] \right),
\]
we have $K_0 (C(B_\gamma)) \cong \mathbb{Z}$ and
$K_1 (C(B_\gamma)) \cong 0$.
Since $K_0 (C(K)) \cong \mathbb{Z}$ and
$K_1 (C(K)) \cong 0$, it follows that
$K_0 (J(X)) \cong 0$ and $K_1 (J(X)) \cong 0$.
By the six-term exact sequence of the Cuntz-Pimsner algebra
$\mathcal{O}_\gamma (K)$ due to \cite{Pi}, we have
$K_0 (\mathcal{MC}_\varphi) \cong \mathbb{Z}$
, $K_1 (\mathcal{MC}_\varphi) \cong 0$ and
the unit $[I]_0$ in $K_0$ maps to $1$ in $\mathbb{Z}$.
Hence $\mathcal{MC}_\varphi$ is isomorphic to
the Cuntz algebra $\mathcal{O}_\infty$.
\end{ex}

\begin{ex}
A map $\sigma: [0,1] \to [0,1]$ is defined by
\[
  \sigma (x) = \begin{cases}
                 3x & 0 \leq x \leq \frac{1}{3}, \\
                 -3x + 2 & \frac{1}{3} \leq x \leq \frac{2}{3}, \\
                 3x - 2 & \frac{2}{3} \leq x \leq 1.
               \end{cases}
\]
Let $\varphi: [0, 1] \times [0, 1] \to [0, 1] \times [0, 1]$ be given by
$\varphi(x, y) = (\tau(x), \sigma(y))$ for $x, y \in [0,1]$.
Then inverse branches of $\varphi$ are $\gamma_1, \dots, \gamma_6$ such that
\begin{alignat*}{2}
\gamma_1 (x, y) &= \left( \frac{1}{2} x, \, \frac{1}{3} y \right), &\quad
\gamma_2 (x, y) &= \left( \frac{1}{2} x, \, - \frac{1}{3} y  + \frac{2}{3}\right), \\
\gamma_3 (x, y) &= \left( \frac{1}{2} x, \,  \frac{1}{3} y + \frac{2}{3} \right), &\quad
\gamma_4 (x, y) &= \left( - \frac{1}{2} x + 1, \, \frac{1}{3} y \right), \\
\gamma_5 (x, y) &= \left( - \frac{1}{2} x + 1, \,  - \frac{1}{3} y  + \frac{2}{3} \right), &\quad
\gamma_6 (x, y) &= \left( - \frac{1}{2} x + 1, \, \frac{1}{3} y  + \frac{2}{3} \right).
\end{alignat*}
for $x, y \in [0, 1]$.
The maps $\gamma_1, \dots, \gamma_6$
are proper contractions and $K = [0, 1] \times [0, 1]$
is the self-similar set
with respect to $\gamma =  (\gamma_1, \dots, \gamma_6)$.
The Hutchinson measure $\mu^H$ on $K$ coincides with
the Lebesgue measure $m$ on $K$.
The system $\gamma$ satisfies
the open set condition and the measure separation condition in $K$.
Similar to the previous example, we have
$(K_0(\mathcal{MC}_\varphi), [I]_0, K_1(\mathcal{MC}_\varphi)) \cong (\mathbb{Z}, 1, 0)$. Hence the C$^*$-algebra $\mathcal{MC}_\varphi$ is isomorphic to the Cuntz algebra $\mathcal{O}_\infty$.
\end{ex}

\begin{rem}
Let $\varphi: [0, 1] \to [0,1]$ be given by
$\varphi(x) = \tau(x)$ for $x \in [0, 1]$.
By \cite[Example]{H3}, the C$^*$-algebra $\mathcal{MC}_\varphi$
is isomorphic to the Cuntz algebra $\mathcal{O}_\infty$.
On the other hand, let $\psi: [0, 1] \to [0,1]$ be given by
$\psi(x) = \sigma(x)$ for $x \in [0, 1]$.
We have $K_0(\mathcal{MC}_\psi) \cong \mathbb{Z}^2$ and
$K_1(\mathcal{MC}_\psi) \cong 0$.
Hence the C$^*$-algebra $\mathcal{MC}_\psi$ is not
isomorphic to the Cuntz algebra $\mathcal{O}_\infty$.
\end{rem}

\end{document}